\newtheorem{theorem}{Theorem}[section]
\newtheorem{lemma}[theorem]{Lemma}
\theoremstyle{definition}
\newtheorem{definition}[theorem]{Definition}
\newtheorem{remark}[theorem]{Remark}
\newtheorem{corollary}[theorem]{Corollary}
\newtheorem*{maintheorem}{Main Theorem}
\begin{document}
\title[The simple $\mathscr{B}_{\psi}$-groups]{The simple $\mathscr{B}_{\psi}$-groups}
\author[M. Baniasad Azad]{Morteza Baniasad Azad}

\address{Abolfazl Street, 22 Bahman Blvd., Bardsir, Kerman, Iran. Postal code:78416-64979. Phone number:+98-9214739579. \newline}
	
	\email{baniasad84@gmail.com}
\thanks{}

\subjclass[2010]{20D05, 20D60, 20D06, 20D08}

\keywords{Simple groups, sum of element order, finite group, $\mathscr{B}_{\psi}$-groups}

\begin{abstract}
In a finite group $ G $, $ \psi(G) $ denotes the sum of element orders of $ G $.	
A finite group $ G $ is said to be a $\mathscr{B}_{\psi}$-group if $ \psi(H) < |G| $ for any proper subgroup $ H $ of $ G $.

In \cite{Lazorec} Lazorec asked: "what can be said about the $\mathscr{B}_{\psi}$ property of the finite simple groups $ \operatorname{PSL}(2, q) $?" In this  paper, we  answer  this question for the case of not only the finite simple groups
$ \operatorname{PSL}(2, q) $
but also all other finite simple groups.
We show that if $ S $ is a finite simple group, such that $ S \neq Alt(n) $ for any $ n \geq 14 $, then $S$ is a $\mathscr{B}_{\psi}$-group.
\end{abstract}

\maketitle

\section{Introduction}\label{sec1}
Throughout this paper all groups are finite.
The cyclic group of order $ n $ is denoted by $ C_n $.
The order of $ g \in G $ is denoted by $ o(g) $ and the sum  of element orders of $ G $ is denoted by $ \psi(G)  $.
Some relations between the structure of the group $ G $ and $ \psi(G) $ are given in \cite{algebra,canada}.
A Group $G$ is called   $\mathscr{B}_{\psi}$-group if
$ \psi(H) < |G| $ for all
proper subgroups $H$ of $G$ \cite{Harrington}.

For more details about $\mathscr{B}_{\psi}$-group, we refer the reader to \cite{Harrington,Lazorec};
for example, the authors proved the following results:
\begin{theorem}\cite[Theorem 18]{Harrington}\label{Harrington}
	Let $ G $ be a finite abelian group. Then $ G $ is a 
	$\mathscr{B}_{\psi}$-group if and
	only if 
	$ G \cong C_{p^2} $ or $ G \cong C_p^n $, where $ p $ is a prime and $ n \geq 1 $.	
\end{theorem}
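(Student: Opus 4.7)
The plan is to prove the forward (``if'') direction by a short direct computation, and the reverse (``only if'') direction by its contrapositive: for every abelian $G$ outside the listed families, exhibit a proper subgroup $H < G$ with $\psi(H) \geq |G|$.

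For the ``if'' direction, when $G = C_{p^2}$ the only proper subgroups are $\{1\}$ and the unique $C_p$, and $\psi(C_p) = 1 + (p-1)p = p^2 - p + 1 < p^2$. When $G = C_p^n$ every proper subgroup is isomorphic to $C_p^k$ with $0 \leq k \leq n-1$, and $\psi(C_p^k) = 1 + (p^k - 1)p = p^{k+1} - p + 1 \leq p^n - p + 1 < p^n = |G|$.

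For the ``only if'' direction, write $G$ in invariant factor form $G \cong C_{d_1} \times \cdots \times C_{d_r}$ with $d_1 \mid \cdots \mid d_r$. The listed groups correspond to $r = 1$ with $d_1 \in \{p, p^2\}$ or $d_1 = \cdots = d_r = p$; everything else falls into one of four cases: (a) $G \cong C_{p^m}$ with $m \geq 3$; (b) $G$ cyclic with $|G|$ divisible by at least two distinct primes; (c) $G$ a non-cyclic $p$-group with $\exp(G) \geq p^2$; (d) $G$ non-cyclic with $|G|$ divisible by at least two distinct primes. In case (a), take $H = C_{p^{m-1}}$; using $\psi(C_{p^k}) = (p^{2k+1}+1)/(p+1)$, the inequality $\psi(H) \geq p^m$ reduces to $p^{2m-1}+1 \geq p^{m+1} + p^m$, which holds since $2m-1 \geq m+2$ for $m \geq 3$. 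In case (b), take $H = C_{|G|/p}$ with $p$ the smallest prime divisor of $|G|$ and exploit the multiplicativity $\psi(C_{ab}) = \psi(C_a)\psi(C_b)$ for $\gcd(a,b) = 1$. In case (c), shrink one of the cyclic summands of $G$ by a factor of $p$ to obtain a maximal subgroup $H$ of index $p$, and classify the elements of $H$ by their orders; in the tightest subcase $G \cong C_{p^2} \times C_p^{k-1}$ with $k \geq 2$ and $H \cong C_{p^2} \times C_p^{k-2}$, the explicit enumeration collapses to
\[
\psi(H) - |G| \;=\; (p-1)\bigl(p^{k}(p-1) - 1\bigr) \;\geq\; 0.
\]
Case (d) reduces to (b) or (c) via the primary decomposition $G = G_1 \times G_2$ together with $\psi(H_1 \times H_2) = \psi(H_1)\psi(H_2)$ when $\gcd(|H_1|,|H_2|) = 1$.

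The main obstacle is case (c). In (a), (b), and (d) the required bound drops out quickly from the closed formula and multiplicativity, but in (c) a single cyclic subgroup of order $\exp(G)$ is often too small when $G$ has many elementary-like factors, so one must work with non-cyclic $H$ of shape $C_{p^2} \times C_p^s$ and verify the resulting polynomial inequality uniformly in $p$ and $s$. This uniform verification is the most delicate computation, but it reduces, after simplification, to a single elementary inequality of the displayed type.
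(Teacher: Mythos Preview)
The paper does not give its own proof of this statement: Theorem~\ref{Harrington} is quoted from \cite[Theorem~18]{Harrington} and invoked as background, so there is no in-paper argument against which to compare your attempt.

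Judged on its own, your outline is sound for the ``if'' direction and for cases (a) and (b) of the converse. The weaknesses lie in (c) and (d). In (c) you declare $G\cong C_{p^2}\times C_p^{\,k-1}$ with $H\cong C_{p^2}\times C_p^{\,k-2}$ to be the ``tightest subcase'' and verify only that one inequality; nothing in the write-up explains why an arbitrary non-cyclic abelian $p$-group with $\exp(G)\ge p^2$ (for instance $C_{p^3}\times C_{p^2}$, or $C_{p^2}\times C_{p^2}\times C_p^{\,s}$) is dominated by that single computation---some monotonicity or reduction argument is needed and is not supplied. In (d) the phrase ``reduces to (b) or (c) via the primary decomposition'' skips a genuine sub-case: if \emph{every} Sylow subgroup of $G$ is itself a $\mathscr{B}_\psi$-group (each some $C_p^{\,n}$ or $C_{p^2}$), then none of (a), (b), (c) applies to any factor, and you must instead take $H=H_p\times G_{p'}$ with $H_p$ a proper subgroup of one Sylow and verify a mixed inequality of the form $\psi(H_p)\,\psi(G_{p'})\ge |G_p|\,|G_{p'}|$, using a lower bound on $\psi(G_{p'})/|G_{p'}|$. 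This works, but it is an additional computation, not the automatic reduction you claim.
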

\begin{theorem}\cite[Theorem 2.5]{Lazorec}	 Let $ G $ be a finite nilpotent group. Then $ G $ is a $\mathscr{B}_{\psi}$-group if and	only if $ G \cong C_{p^2} $ or $ exp(G) = p $, where $ p $ is a prime.
\end{theorem}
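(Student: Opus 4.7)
The plan is to prove both implications by using the Sylow decomposition $G=P_1\times\cdots\times P_k$ available for nilpotent $G$, reducing everything to the case of $p$-groups. Sufficiency is immediate: if $G\cong C_{p^2}$, the only nontrivial proper subgroup $C_p$ has $\psi(C_p)=p^2-p+1<p^2$; if $\exp(G)=p$, then $G$ is a $p$-group with every nonidentity element of order $p$, so any proper subgroup $H$ (necessarily of index at least $p$) satisfies $\psi(H)=1+(|H|-1)p\le|G|-p+1<|G|$.

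For the converse, assume $G$ is a nilpotent $\mathscr{B}_{\psi}$-group. The first reduction is to show $k=1$. Exploiting the multiplicativity $\psi(A\times B)=\psi(A)\psi(B)$ for coprime-order $A,B$, I would examine the Sylow subgroup $P_i$ corresponding to the largest prime $p_i$ dividing $|G|$: the estimate $\psi(P_i)\ge 1+(|P_i|-1)p_i$ combined with the size constraint $|G|/|P_i|=\prod_{j\ne i}|P_j|$ quickly forces $\psi(P_i)\ge|G|$ in all but a handful of small cases, which can be verified individually (for instance $C_6,C_{10},C_{15}$ already fail the $\mathscr{B}_{\psi}$ condition because $\psi(C_q)=q^2-q+1>pq$ for the relevant prime pairs).

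Once $G$ is reduced to a $p$-group of exponent $p^m\ge p^2$, the goal is to exhibit a proper subgroup $H$ with $\psi(H)\ge|G|$ unless $G\cong C_{p^2}$. For cyclic $G=C_{p^m}$ with $m\ge 3$, the subgroup $C_{p^{m-1}}$ already fails the bound since $\psi(C_{p^{m-1}})\ge p^{2m-3}(p-1)>p^m$. For noncyclic $G$ containing an element $g$ of order $p^m$, take $M$ to be a maximal subgroup containing $\langle g\rangle$ and enumerate elements of $M$ by order; the template case $G=C_{p^2}\times C_p^n$ gives $\psi(M)-|G|=p^{n+1}(p-1)^2-p+1>0$, and the same pattern generalizes.

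Main obstacle: the noncyclic $p$-group step. A single cyclic subgroup $\langle g\rangle\cong C_{p^m}$ need not satisfy $\psi(\langle g\rangle)\ge|G|$ when $|G|$ is much larger than $p^{2m}$, so one must pass to a maximal subgroup $M$ and carry out a delicate order-by-order count to compare $\psi(M)$ with $|G|$. This is where most of the technical work lives; once it is handled, $G\cong C_{p^2}$ is the only remaining $\mathscr{B}_{\psi}$-group of exponent greater than $p$.
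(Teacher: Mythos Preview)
The paper does not actually prove this theorem: it is quoted verbatim as \cite[Theorem 2.5]{Lazorec} and used only as background, so there is no ``paper's own proof'' to compare against. That said, your proposal has a genuine gap in the reduction to $p$-groups.

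Your plan for showing $k=1$ is to take the Sylow subgroup $P_i$ for the \emph{largest} prime $p_i\mid |G|$ and argue that $\psi(P_i)\ge 1+(|P_i|-1)p_i$ forces $\psi(P_i)\ge |G|$ ``in all but a handful of small cases''. This is false: writing $m=\prod_{j\ne i}|P_j|$, the desired inequality $1+(|P_i|-1)p_i\ge |P_i|\,m$ rearranges to $|P_i|(p_i-m)\ge p_i-1$, which fails whenever $m\ge p_i$. Since $m$ is a product of powers of primes smaller than $p_i$, it can be arbitrarily large; e.g.\ for $G=C_3\times C_2^{\,n}$ one has $p_i=3$, $\psi(P_i)=7$, while $|G|=3\cdot 2^{n}$, so $\psi(P_i)<|G|$ for every $n\ge 2$. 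These are not finitely many ``small'' exceptions. The correct witness in this step is not $P_i$ but a maximal subgroup of \emph{smallest} prime index: if $p<q$ are two primes dividing $|G|$ and $M$ has index $p$, then using $\psi(Q)\ge 1+(|Q|-1)q$ for the Sylow $q$-subgroup $Q\le M$ together with $\psi(M/ Q$-complement$)\ge |M|/|Q|$ and multiplicativity gives $\psi(M)\ge |G|$ after a short computation (the key inequality reduces to $q^{b}(q-p)\ge q-1$, which holds since $q>p$).

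Your handling of the $p$-group step is also left at the level of a template example, but here the idea is sound and can be completed: for noncyclic $G$ with $\exp(G)\ge p^{2}$, pick $g$ of order $p^{2}$ and a maximal $M\ni g$; the $p(p-1)$ generators of $\langle g\rangle$ already push $\psi(M)\ge p|M|+1-p+p^{2}(p-1)^{2}\ge p|M|=|G|$. So the real error is in the first reduction, not the second.
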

In \cite{Lazorec}, 
Lazorec put forward the following question:
\\
\textbf{Question.} \cite[Question 3]{Lazorec} What can be said about the $\mathscr{B}_{\psi}$ property of the finite simple groups $ \operatorname{PSL}(2, q) $? 

In this paper, we give an answer to the above question for the case of   not only for $ \operatorname{PSL}(2, q) $ but also  for all simple groups:
\begin{maintheorem}\label{main}
	Let $ S $ be a finite  simple group, such that $ S \neq	Alt(n) $ for any $ n \geq 14 $. Then
	$S$ is a $\mathscr{B}_{\psi}$-group.
\end{maintheorem}

\textbf{Notation.}
For
a group $ G $ we denote by $ meo(G) $ the maximum order of an element of $ G $
and by $ m(G) $ the minimum
index  of a maximal subgroup of $ G $.
\[meo(G)=\max\{o(g)|g \in G\},\qquad m(G)=\min \{|G:M|| M \text{ is proper of } G\}.\]
Also, we denote by $ m_2(G) $ the second minimum
index  of a maximal subgroup of $ G $.
\section{\bf Main results}

\begin{definition}
	We say a group $G$ is a $meo$-group if $meo(G)\leq m(G)$.
\end{definition}
\begin{remark}
	For any non-trivial group $G$, we have $\psi(G)<|G|\cdot meo(G)$.
\end{remark}

\begin{lemma}\label{meo-group}
	Every non-trivial $meo$-group is a $\mathscr{B}_{\psi}$-group.
\end{lemma}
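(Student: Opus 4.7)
The plan is to combine the Remark (which gives $\psi(H) < |H| \cdot meo(H)$ for any non-trivial $H$) with two simple monotonicity bounds: $meo(H) \leq meo(G)$ because every element of $H$ is an element of $G$, and $|G:H| \geq m(G)$ because $H$ is contained in some maximal subgroup whose index is at least $m(G)$ (so in particular $m(G)$ is the minimum index over all proper subgroups, not only maximal ones).

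Let $G$ be a non-trivial $meo$-group and let $H$ be a proper subgroup of $G$. First I would dispense with the trivial case $H = \{e\}$: here $\psi(H) = 1$, while $|G| \geq 2$ since $G$ is non-trivial, so $\psi(H) < |G|$ holds. Otherwise, $H$ is a non-trivial proper subgroup, and I would apply the Remark to $H$ to obtain
\[
\psi(H) < |H| \cdot meo(H).
\]
Next I would chain the inequalities
\[
meo(H) \leq meo(G) \leq m(G) \leq |G : H|,
\]
where the first is the elementary containment bound, the second is the $meo$-group hypothesis, and the third follows from the definition of $m(G)$ (the minimum index of a proper subgroup of $G$). Multiplying by $|H|$ yields $|H| \cdot meo(H) \leq |H| \cdot |G:H| = |G|$, so $\psi(H) < |G|$, as required.

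I do not expect any real obstacle: the proof is a one-line chain of inequalities built directly from the definitions of $meo$-group and $\mathscr{B}_{\psi}$-group, together with the Remark. The only subtlety to watch is the trivial subgroup, which needs to be treated separately because the Remark is stated only for non-trivial groups. Nothing beyond elementary facts about subgroups and orders of elements is needed.
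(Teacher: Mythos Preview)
Your proof is correct and follows exactly the paper's approach: treat the trivial subgroup separately, then for a non-trivial proper subgroup chain $\psi(H) < |H|\cdot meo(H) \leq |H|\cdot meo(G) \leq |H|\cdot |G:H| = |G|$. Your explicit remark that $m(G)$ is also the minimum index over all proper (not just maximal) subgroups is a helpful justification that the paper leaves implicit.
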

\begin{proof}
	Let $G$ be a non-trivial $meo$-group.
	Then for all
	proper subgroups $M$ of $G$,
	$meo(G)\leq |G:M|$.
	If $M=1$, then $ \psi(M) <G $. If $M\neq1$, then
	\[\psi(M) < |M| meo(M) \leq |M| meo(G)  \leq |M| |G:M|=|G|.\]
	Therefore $G$ is a $\mathscr{B}_{\psi}$-group.
\end{proof}

\begin{lemma}\label{mohem} \cite[Theorem 1.2]{Mohem}
	For a finite non-abelian simple group $S$, either $meo(\operatorname{Aut}(S))<m(S)/4$ or $ S $ is listed in Table 1.
	\begin{table}[h] \label{111}
		\caption{Exceptions in Lemma \ref{mohem}}
		\begin{tabular}{|ll|l|l|l|l|} 
			\hline$M_{11}$ & $M_{23}$ & $\operatorname{Alt}(n)$ & $\operatorname{PSL}(n, q)$ & $\operatorname{PSU}(3, 3)$ & $\operatorname{PSp}(6, 2)$ \\
			$M_{12}$ & $M_{24}$ & & & $\operatorname{PSU}(3, 5)$ & $\operatorname{PSp}(8,2)$ \\
			$M_{22}$ & $H S$ & & & $\operatorname{PSU}(4, 3)$ & $\operatorname{PSp}(4, 3)$ \\
			\hline
		\end{tabular}
	\end{table}
\end{lemma}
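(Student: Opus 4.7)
The plan is to work through the classification of finite simple groups, establishing the bound $4 \cdot meo(\operatorname{Aut}(S)) < m(S)$ for every non-abelian simple $S$ outside the stated exception list. For each family one would control $meo(\operatorname{Aut}(S))$ from above and $m(S)$ from below; the ratio of these quantities grows with rank and field size, so in each infinite family only finitely many groups can fail the bound.

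The 26 sporadic simple groups are treated by direct inspection of the ATLAS, since both $meo$ and $m$ are read off from the character tables and the lists of maximal subgroups of $S$ and $\operatorname{Aut}(S)$. The inequality then holds except for $M_{11}, M_{12}, M_{22}, M_{23}, M_{24}$, and $HS$, which therefore appear in the table. The alternating groups $\operatorname{Alt}(n)$ are placed in the exception list as a single infinite family: for all $n \geq 5$ one has $m(\operatorname{Alt}(n))=n$, while by Landau's theorem $meo(\operatorname{Aut}(\operatorname{Alt}(n)))$ grows roughly as $\exp(\sqrt{n\log n})$, so the bound fails both for small $n$ (by direct computation) and asymptotically.

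For the classical simple groups $S$ of Lie type I would use the explicit upper bounds on $meo(S)$ due to Kantor--Seress, which give $meo(S)$ as essentially $(q^n-1)/(q-1)$ up to a small factor depending on type, and multiply by $|\operatorname{Out}(S)|$ (field, graph, and diagonal contributions) to pass to $\operatorname{Aut}(S)$. For $m(S)$ I would invoke the minimal permutation degrees tabulated by Cooperstein and by Kleidman--Liebeck, which for classical groups other than $\operatorname{PSL}$ grow like $q^{2(n-1)}$ or faster. The resulting ratio comfortably exceeds $4$ outside a short list of low-rank, small-field cases, which are then settled directly from the ATLAS; these checks produce exactly $\operatorname{PSU}(3,3), \operatorname{PSU}(3,5), \operatorname{PSU}(4,3), \operatorname{PSp}(4,3), \operatorname{PSp}(6,2), \operatorname{PSp}(8,2)$. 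The family $\operatorname{PSL}(n,q)$ is placed wholesale in the exception list because its smallest faithful action, on the projective space of dimension $n-1$, has degree $(q^n-1)/(q-1)$, which is of the same order of magnitude as $meo(S)$ itself, so the factor of $4$ is not available in general.

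For the exceptional groups of Lie type $G_2(q), F_4(q), {}^3D_4(q), E_6(q), {}^2E_6(q), E_7(q), E_8(q), {}^2B_2(q), {}^2G_2(q), {}^2F_4(q)$ I would combine bounds on orders of maximal tori from Deriziotis--Liebeck--Seitz with the minimal projective degrees of Vasilyev and L\"ubeck; the ratio grows rapidly enough that no exceptional group enters the exception list. The main obstacle throughout is the tight low-rank, small-field bookkeeping for the classical families: the $4$ in $m(S)/4$ is a narrow margin, so for each small $q$ and $n$ one must either identify the group as an exception or carry out a careful case-by-case check in the ATLAS or in \textsc{Gap}, and it is precisely this sift that determines the exact composition of Table~1.
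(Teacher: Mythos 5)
The first thing to note is that the paper does not prove this statement at all: Lemma~\ref{mohem} is imported verbatim as \cite[Theorem 1.2]{Mohem} (Guest--Morris--Praeger--Spiga), so there is no internal proof to compare yours against. Your outline is, in broad strokes, the strategy that the cited source actually carries out --- run through the classification, bound $meo(\operatorname{Aut}(S))$ from above and $m(S)$ from below family by family, and sift the low-rank, small-field residue by explicit computation --- so you have correctly identified how such a theorem is established. But as written it is a proof plan rather than a proof. The entire content of the lemma is the \emph{exact} composition of Table~1, and every step that would determine it is deferred: ``the ratio comfortably exceeds $4$ outside a short list'' and ``these checks produce exactly \dots'' are the conclusions one needs to establish, not premises one may assume. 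Since the margin is a bare factor of $4$, none of the families can be waved through; each requires the explicit inequalities to be written down and the boundary cases enumerated.

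There is also a concrete quantitative error that would derail the sift if taken literally: the minimal permutation degrees of the classical groups other than $\operatorname{PSL}$ do not grow like $q^{2(n-1)}$. For instance $m(\operatorname{PSU}(3,q))=q^3+1$, which is smaller than $q^{2(n-1)}=q^4$, and $m(\operatorname{PSp}(2m,q))=(q^{2m}-1)/(q-1)\sim q^{2m-1}=q^{n-1}$ for odd $q$ (with a still smaller degree $2^{m-1}(2^m-1)$ when $q=2$). These are exactly the regimes that generate the exceptions $\operatorname{PSU}(3,3)$, $\operatorname{PSU}(3,5)$, $\operatorname{PSp}(6,2)$, $\operatorname{PSp}(8,2)$, so overestimating $m(S)$ here would cause you to miss entries of Table~1. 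The honest options are either to carry out the full Guest--Morris--Praeger--Spiga analysis with the correct degree formulas (a substantial paper's worth of work), or to do what the author does and simply cite their Theorem~1.2.
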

\begin{theorem}\label{PSL(n,q)}
	Let $n\geqslant2$ and $(n, q)\neq(2,2), (2,3)$ and  $ q $ is a power $ p^a $ of a prime $ p $. Then
	\begin{enumerate}
		\item 
		the simple groups
		$ \operatorname{PSL}(n,q) $, where $ (n, q) \neq (4, 2) $, are $meo$-groups.
		\item
		the simple groups $ \operatorname{PSL}(n,q) $ are $\mathscr{B}_{\psi}$-groups. 
	\end{enumerate}
\end{theorem}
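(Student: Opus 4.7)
\medskip

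\noindent\textbf{Proof proposal for Theorem \ref{PSL(n,q)}.} The plan is to prove (1) by exhibiting an upper bound on $meo(\operatorname{PSL}(n,q))$ and a matching lower bound on $m(\operatorname{PSL}(n,q))$, and then to obtain (2) from (1) via Lemma \ref{meo-group}, handling the single outlier $(n,q)=(4,2)$ by hand.

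\medskip

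\noindent\emph{Step 1: Reduce (2) to (1).}  If $\operatorname{PSL}(n,q)$ is a $meo$-group, then Lemma \ref{meo-group} immediately gives that it is a $\mathscr{B}_{\psi}$-group, so by (1) the only case left for (2) is $(n,q)=(4,2)$. Since $\operatorname{PSL}(4,2)\cong\operatorname{Alt}(8)$, its maximal subgroups are well-known (two classes isomorphic to $\operatorname{Alt}(7)$, $S_{6}$, $2^{3}{:}\operatorname{PSL}(3,2)$, $(\operatorname{Alt}(5)\times 3){:}2$, and $(\operatorname{Alt}(4)\times\operatorname{Alt}(4)){:}2$), and for each of these $H$ a direct computation (or ATLAS lookup) of $\psi(H)$ verifies $\psi(H)<|\operatorname{Alt}(8)|=20160$.

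\medskip

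\noindent\emph{Step 2: The bound on $m(\operatorname{PSL}(n,q))$.} For $n\geq 3$ the natural action on the projective space $\mathrm{PG}(n-1,q)$ gives $m(\operatorname{PSL}(n,q))=(q^{n}-1)/(q-1)$ (the well-known minimum permutation degree, with the classical low-dimension exceptions $\operatorname{PSL}(2,5),\operatorname{PSL}(2,7),\operatorname{PSL}(2,9),\operatorname{PSL}(2,11),\operatorname{PSL}(4,2)$ already excluded or handled). For $n=2$ and $q\geq 13$, $m(\operatorname{PSL}(2,q))=q+1$.

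\medskip

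\noindent\emph{Step 3: The bound on $meo(\operatorname{PSL}(n,q))$.} One compares the orders arising from semisimple tori and from unipotent elements. For the tori, the Singer-type cyclic maximal torus has order $(q^{n}-1)/((q-1)\gcd(n,q-1))$, and the other tori produce divisors of similar expressions; for unipotent elements the exponent is $p^{\lceil\log_{p}n\rceil}$. Together these give $meo(\operatorname{PSL}(n,q))\leq (q^{n}-1)/(q-1)$ in the ranges we need. For $n=2$ the list reduces to $p$, $(q-1)/\gcd(2,q-1)$, and $(q+1)/\gcd(2,q-1)$, so $meo(\operatorname{PSL}(2,q))\leq q+1$ for $q\geq 13$.

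\medskip

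\noindent\emph{Step 4: Combine and dispose of small cases.} Steps 2 and 3 yield $meo(\operatorname{PSL}(n,q))\leq m(\operatorname{PSL}(n,q))$ for $n\geq 3$ (except $(4,2)$) and for $n=2$, $q\geq 13$. The remaining small cases, namely $\operatorname{PSL}(2,q)$ for $q\in\{4,5,7,8,9,11\}$ and $\operatorname{PSL}(3,q)$ for $q\in\{2,3,4\}$, are verified individually from character-table or ATLAS data: in each case $meo$ does not exceed $m$, so each is a $meo$-group. The main obstacle I expect is isolating the handful of low-rank/small-$q$ exceptions where a unipotent element or a torus element can momentarily overshoot the generic bound and verifying each directly; once those are tabulated, the generic estimates handle the infinite families uniformly.
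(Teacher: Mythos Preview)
Your proposal is correct and follows essentially the same strategy as the paper: bound $m(\operatorname{PSL}(n,q))$ by the minimal permutation degree, bound $meo(\operatorname{PSL}(n,q))$ from above by $(q^n-1)/(q-1)$, deduce (1), then get (2) from Lemma \ref{meo-group} and treat $(4,2)\cong\operatorname{Alt}(8)$ separately. The paper shortens two of your steps: for Step 3 it simply observes $\operatorname{PSL}(n,q)\leq\operatorname{PGL}(n,q)$ and cites the exact value $meo(\operatorname{PGL}(n,q))=(q^n-1)/(q-1)$ from Guest--Morris--Praeger--Spiga (so no torus/unipotent analysis or extra small cases for $n=2,3$ are needed beyond $q\in\{5,7,9,11\}$), and for $(4,2)$ it notes that every maximal subgroup other than $\operatorname{Alt}(7)$ has order at most $1344$, whence $\psi(M)<1344\cdot 15=20160$, leaving only $\psi(\operatorname{Alt}(7))=12601$ to compute.
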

\begin{proof}
	(1)	We show that $meo(\operatorname{PSL}(n, q))\leq m(\operatorname{PSL}(n, q))$ for $ (n, q) \neq (4, 2) $.
	Using \label{mininspslnq}\cite[Theorem 1]{Mazurovminindessimpleclassical},
	we have
	\[m(\operatorname{PSL}(n, q))=\left\{ \begin{matrix}
		\begin{matrix}
			8 & (n, q)=(4, 2)  \\
			6 & (n, q)=(2, 9)  \\
			q & n=2, q\in \{5, 7, 11\}  \\
			\frac{(q^n - 1)}{(q - 1)} & \text{o.w}  \\
		\end{matrix}  \\
	\end{matrix} \right.\]	
	Using \cite{gap}, we have $meo(\operatorname{PSL}(2,5))=5$,  $meo(\operatorname{PSL}(2,7))=7$, $meo(\operatorname{PSL}(2,11))=11$ and
	$meo(\operatorname{PSL}(2,9))=5$ and so  $\operatorname{PSL}(2,5), \operatorname{PSL}(2,7), \operatorname{PSL}(2,11) $ and $ \operatorname{PSL}(2,9)$ are $meo$-groups.
	Therefore we assume that $(n,q) \notin \{(2,5), (2,7), (2,11), (2,9), (4, 2)\}$.
	We know that $\operatorname{PSL}(n,q)$ is a subgroup of $\operatorname{PGL}(n,q)$. Therefore 
	$meo(\operatorname{PSL}(n,q))\leqslant meo(\operatorname{PGL}(n,q))$.
	By \cite[Corollary 2.7]{Mohem}, we see that
	$ meo(\operatorname{PGL}(n, q)) = (q^n - 1)/(q - 1) $. Thus 
	\[meo(\operatorname{PSL}(n,q))\leqslant (q^n - 1)/(q - 1) = m(\operatorname{PSL}(n, q)) \] and so we get the result.
	
	(2) First we assume that $(n, q)\neq (4, 2)$.  Using part (1) and Lemma \ref{meo-group}, we have the simple groups $\operatorname{PSL}(n, q)$  are $\mathscr{B}_{\psi}$-groups. 
	Now we assume that $(n, q)=(4, 2)$, we show that $\operatorname{PSL}(4,2)$ is a $\mathscr{B}_{\psi}$-group.  We know that $\operatorname{PSL}(4,2) \cong \operatorname{Alt}(8)$.
	Let $M$ be a maximal group in $\operatorname{PSL}(4,2) \cong \operatorname{Alt}(8)$.
	If $M \ncong A_7$, then
	by \cite[page 22]{atlas}, we have $|M|\leqslant 1344$  and therefore
	$$\psi(M) < |M| meo(M) \leqslant 1344 meo(\operatorname{PSL}(4,2))= 1344 \cdot 15 = 20160=|\operatorname{PSL}(4,2)|.$$
	If $M \cong \operatorname{Alt}(7)$, then by using GAP we have $\psi(\operatorname{Alt}(7))=12601< 20160=|\operatorname{PSL}(4,2)|$.
	Therefore $\operatorname{PSL}(4,2)$ is a $\mathscr{B}_{\psi}$-group. 
\end{proof}
\begin{proof}[{\textsc{\textbf{Proof  of the main theorem}}}]
	Let $S$ be a simple group. If $S$ is an abelian group, then $S\cong C_p$, where $p$ is prime. Therefore
	by Theorem \ref{Harrington}, $S$ is a $\mathscr{B}_{\psi}$-group. So we suppose that $S$ is a non-abelian simple group.
	If $S$ is a simple group rather than  the groups listed in Table 1, then
	by Lemma \ref{mohem}, we have $meo(\operatorname{Aut}(S))<m(S)/4$.
	Since $meo(S)\leq meo(\operatorname{Aut}(S))$ and $m(S)/4<m(S)$, we have $meo(S)<m(S)$, therefore $S$ is an $meo$-group and using Lemma \ref{meo-group},
	$S$ is a $\mathscr{B}_{\psi}$-group.
	Now, we consider the following cases (listed in Table 1):
	\begin{itemize}
		\item Let $S \in \{M_{11}, M_{12}, M_{22}, M_{23}, M_{24}, H S\}$. Then by using \cite{atlas} we have
		\begin{align*}	
			& meo(M_{11})=11=m(M_{11}), &meo(M_{12})=11<12=m(M_{12}),\\
			& meo(M_{22})=11<22=m(M_{22}), &meo(M_{23})=23=m(M_{23}),\\
			& meo(M_{24})=23<24=m(M_{24}), &meo(HS)=20<100=m(HS).
		\end{align*}		
		Therefore $S$ is an $meo$-group	ans so $S$ is a $\mathscr{B}_{\psi}$-group.
		\item Let $S = \operatorname{PSL}(n, q)$. Then by Theorem \ref{PSL(n,q)}, we get the result.
		\item Let $ S \in \{\operatorname{PSU}(3, 3), \operatorname{PSU}(3, 5), \operatorname{PSU}(4, 3)\} \cup \{ \operatorname{PSp}(6, 2), \operatorname{PSp}(8, 2), \operatorname{PSp}(4, 3)\}$.
		Using \cite[Table 4]{Mohem}, \cite{atlas,gap}, we have:
		\begin{align*}	
			& meo(\operatorname{PSU}(3, 3))=12<28=m(\operatorname{PSU}(3, 3)),\\ 
			& meo(\operatorname{PSU}(3, 5))=10<50=m(\operatorname{PSU}(3, 5)), \\
			& meo(\operatorname{PSU}(4, 3))=12<112=m(\operatorname{PSU}(4, 3)),\\	
			& meo(\operatorname{PSp}(6, 2))=15<28=m(\operatorname{PSp}(6, 2)),\\
			& meo( \operatorname{PSp}(8, 2))=30<120=m( \operatorname{PSp}(8, 2)), \\
			& meo(\operatorname{PSp}(4, 3))=12<27=m(\operatorname{PSp}(4, 3)).
		\end{align*}
		Therefore $S$ is an $meo$-group and then $S$ is a $\mathscr{B}_{\psi}$-group.
		\item Let $ S =\operatorname{Alt}(n)  $, where $5 \leq n \leq 13$. Using \cite{atlas,gap} we have the following table:
		\[\begin{matrix}
			S & \operatorname{Alt}(5) & \operatorname{Alt}(6) & \operatorname{Alt}(7) & \operatorname{Alt}(8) & \operatorname{Alt}(9) & \operatorname{Alt}(10) & \operatorname{Alt}(11)   \\
			\hline
			meo(S) & 5 & 5 & 7 & 15 & 15 & 21 & 21    \\
			m(S) & 5 & 6 & 7 & 8 & 9 & 10 & 11   \\
			{{m}_{2}}(S) & 6 & 10 & 15 & 15 & 36 & 45 & 55  \\
			\psi(S) & 211 & 1411 & 12601 & 137047 & 1516831 & 18111751 & 223179001  \\ 
			|S| & 60 & 360 & 2520 & 20160 & 181440 & 1814400 & 19958400   \\ 
		\end{matrix}\]
		\[\begin{matrix}
			S & \operatorname{Alt}(12) & \operatorname{Alt}(13) & \operatorname{Alt}(14) & \operatorname{Alt}(15) \\
			\hline
			meo(S) & 35 & 35 &   &     \\
			m(S) & 12 & 13 &   &     \\
			{{m}_{2}}(S) & 66  & 78 &   &     \\
			\psi(S) & 2973194071 & 46287964867  & 835826439631  &  15722804528341  \\ 
			|S| & 239500800 & 3113510400  & 43589145600  &  653837184000  \\ 
		\end{matrix}\]
		By the above table we see that if $n \in \{5, 6, 7\}$, then $meo(\operatorname{Alt}(n))\leq m(\operatorname{Alt}(n))$  and therefore these groups are
		$meo$-groups and so by Lemma \ref{meo-group} these groups are $\mathscr{B}_{\psi}$-groups. 
		If $n \in \{8, 9, 10, 11, 12, 13\}$, then $\operatorname{Alt}(n)$ is not a $meo$-group. Let $M$ be a maximal subgroup of $\operatorname{Alt}(n)$. Now we consider two following cases:\\
		$\bullet$ Let $M \ncong \operatorname{Alt}(n-1)$. Then by the above table and \cite{atlas} 
		we have
		\begin{align*}
			\psi(M) < |M| meo(M) & \leq |M| meo(\operatorname{Alt}(n)) \leq |M|m_2(\operatorname{Alt}(n))\\
			& \leq |M| |\operatorname{Alt}(n):M|=|\operatorname{Alt}(n)|.	
		\end{align*}
		$\bullet$ Let $M \cong \operatorname{Alt}(n-1)$. Then by the above table we see that $\psi(\operatorname{Alt}(n-1))<|\operatorname{Alt}(n)|$.
		Therefore $\operatorname{Alt}(n)$, where $5 \leq n \leq 13$, is $\mathscr{B}_{\psi}$-group.
	\end{itemize}
	Thus we get the result.
\end{proof}
\begin{corollary}
	If $S$ is a simple group such that $S\neq \operatorname{PSL}(4,2)$, $\operatorname{Alt}(n)$, where $n\geq 8$, then $S$ is a $meo$-group.
\end{corollary}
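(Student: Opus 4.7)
The plan is to extract this corollary directly from the case analysis carried out in the proof of the Main Theorem, since in almost every case that argument actually established the stronger property $meo(S) \leq m(S)$ rather than just the $\mathscr{B}_{\psi}$-property. First I would dispose of the abelian case: a simple abelian group is $C_p$, for which $meo = p = m$, so $C_p$ is trivially an $meo$-group. Then for a non-abelian simple $S$ not appearing in Table 1, Lemma \ref{mohem} gives $meo(S) \leq meo(\operatorname{Aut}(S)) < m(S)/4 < m(S)$, so $S$ is an $meo$-group immediately.

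Next I would handle the groups listed in Table 1 other than $\operatorname{PSL}(n,q)$ and the alternating groups. For the six sporadic groups $M_{11}, M_{12}, M_{22}, M_{23}, M_{24}, HS$ and the six classical exceptions $\operatorname{PSU}(3,3), \operatorname{PSU}(3,5), \operatorname{PSU}(4,3), \operatorname{PSp}(6,2), \operatorname{PSp}(8,2), \operatorname{PSp}(4,3)$, the tables written out inside the proof of the Main Theorem explicitly verify $meo(S) \leq m(S)$, so these are all $meo$-groups. For the $\operatorname{PSL}(n,q)$ case I simply invoke Theorem \ref{PSL(n,q)}(1), which asserts that $\operatorname{PSL}(n,q)$ is an $meo$-group whenever $(n,q) \neq (4,2)$ (and the degenerate pairs $(2,2),(2,3)$, which are not simple or are abelian anyway).

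Finally I would deal with the alternating groups. The small table in the Main Theorem proof shows $meo(\operatorname{Alt}(n)) \leq m(\operatorname{Alt}(n))$ exactly for $n \in \{5,6,7\}$, so these three lie in the $meo$-class. The same table shows that for $n \in \{8,9,10,11,12,13\}$ one has $meo(\operatorname{Alt}(n)) > m(\operatorname{Alt}(n)) = n$, and the classical inequality $meo(\operatorname{Alt}(n)) \geq \operatorname{lcm}(1,\ldots,\lfloor n/2\rfloor)$ grows super-polynomially in $n$ while $m(\operatorname{Alt}(n)) = n$ for all $n \geq 5$, so $\operatorname{Alt}(n)$ fails to be an $meo$-group for every $n \geq 8$. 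Combined with the exceptional status of $\operatorname{PSL}(4,2) \cong \operatorname{Alt}(8)$ already noted in Theorem \ref{PSL(n,q)}, this exhausts the cases and gives the corollary.

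There is essentially no obstacle here: the statement is a bookkeeping consequence of the already-completed case-by-case analysis. The only mild subtlety is to separate out clearly which inequalities in the Main Theorem proof are $meo(S) \leq m(S)$ (giving $meo$-groups) from the ad hoc arguments used for $\operatorname{PSL}(4,2)$ and $\operatorname{Alt}(n)$ with $n \in \{8,\ldots,13\}$ (where a maximal-subgroup argument was needed because the $meo$-bound fails), so that the list of exceptions in the corollary is correctly recorded.
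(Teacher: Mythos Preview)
Your proof is correct and follows exactly the (implicit) route the paper intends: the corollary is a bookkeeping consequence of the case analysis already carried out in the proof of the Main Theorem, and you have correctly identified which cases there establish $meo(S)\le m(S)$ outright.

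One remark: your final paragraph goes beyond what is asked, arguing that the excluded groups $\operatorname{Alt}(n)$ with $n\ge 8$ genuinely fail to be $meo$-groups. This converse is not part of the corollary, and the inequality you invoke for it, $meo(\operatorname{Alt}(n))\ge \operatorname{lcm}(1,\ldots,\lfloor n/2\rfloor)$, is false (for instance $meo(\operatorname{Alt}(10))=21$ while $\operatorname{lcm}(1,\ldots,5)=60$). If you want to record sharpness, simply exhibit an explicit element of order greater than $n$ in $\operatorname{Alt}(n)$ for $n\ge 8$, or appeal to standard lower bounds on Landau's function; but for the corollary as stated you may drop that paragraph entirely.
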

\begin{corollary}
	Let $ G $ be a finite abelian group. Then $ G $ is a $meo$-group if and
	only if $ G \cong C_p^n $, where $ p $ is a prime.
\end{corollary}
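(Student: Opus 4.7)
The plan is to reduce the $meo$-condition for a finite abelian group $G$ to a one-line arithmetic comparison between $\exp(G)$ and the smallest prime dividing $|G|$. The two basic identifications I would use are $meo(G) = \exp(G)$ (immediate in the abelian case, since the exponent of a finite abelian group is realised by a single element) and $m(G) = p$, where $p$ is the smallest prime dividing $|G|$.

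For the formula $m(G) = p$, I would argue that in an abelian group every maximal subgroup $M$ produces a simple abelian quotient $G/M$, which is necessarily cyclic of prime order, so that every index $|G:M|$ is prime; conversely, the structure theorem furnishes a subgroup of index $q$ for each prime $q$ dividing $|G|$. Minimising over these primes yields $m(G) = p$. Combining the two identifications, the condition $meo(G) \leq m(G)$ becomes $\exp(G) \leq p$, and since $p \mid \exp(G)$ this forces $\exp(G) = p$. An abelian group of prime exponent is elementary abelian, so $G \cong C_p^n$ for some $n \geq 1$. The converse direction is immediate: for $G \cong C_p^n$ one has $meo(G) = p = m(G)$, so Lemma \ref{meo-group} applies and $G$ is an $meo$-group.

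There is essentially no obstacle here; the only care needed lies in justifying the explicit description of $m(G)$, which rests on the standard fact that maximal subgroups of abelian groups have prime index and that every prime divisor of $|G|$ is attained as such an index. Everything else collapses to the observation that $\exp(G)$ being bounded above by a prime dividing it forces the group to be elementary abelian.
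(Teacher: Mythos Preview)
The paper states this corollary without proof, so there is no argument in the paper to compare your attempt against. Your argument is correct and is the natural one: identifying $meo(G)=\exp(G)$ and $m(G)=p$ (the least prime dividing $|G|$) reduces the $meo$-condition to $\exp(G)\le p$, and since $p\mid\exp(G)$ this forces $\exp(G)=p$, i.e.\ $G\cong C_p^n$.

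One small slip in the converse direction: you write ``so Lemma~\ref{meo-group} applies and $G$ is an $meo$-group'', but Lemma~\ref{meo-group} asserts that $meo$-groups are $\mathscr{B}_\psi$-groups, which is irrelevant here. You have already computed $meo(C_p^n)=p=m(C_p^n)$, so $G$ is an $meo$-group by definition; the reference to the lemma should simply be deleted.
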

\begin{corollary}	 Let $ G $ be a finite nilpotent group. Then $ G $ is a $meo$-group if and	only if  $ exp(G) = p $, where $ p $ is a prime.
\end{corollary}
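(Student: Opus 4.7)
The plan is to compute both $m(G)$ and $meo(G)$ explicitly for a finite nilpotent group via its Sylow decomposition, and then match them against the defining inequality $meo(G)\le m(G)$. The key structural input is the classical fact that every maximal subgroup of a finite nilpotent group is normal of prime index; combined with the direct product decomposition $G \cong \prod_{p} P_p$ into Sylow subgroups, this shows that for each prime $q$ dividing $|G|$ there is a maximal subgroup of index $q$ (take a maximal subgroup of $P_q$ and cross it with the Hall $q'$-complement). Consequently $m(G) = p_{\min}$, the smallest prime divisor of $|G|$.

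For the ``if'' direction I would assume $\exp(G) = p$. Then every element order divides $p$, so Cauchy's theorem forces $|G|$ to be a power of $p$, whence $m(G) = p$ and $meo(G) = p$; the inequality $meo(G) \le m(G)$ follows immediately.

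For the ``only if'' direction I would let $G$ be a nilpotent $meo$-group and set $p = p_{\min}$, so that $meo(G) \le m(G) = p$. By Cauchy each prime divisor $q$ of $|G|$ is realized as an element order, hence $q \le meo(G) \le p$; minimality forces $q = p$, so $G$ is a $p$-group. Then $p \le meo(G) \le p$ gives $meo(G) = p$, i.e.\ $\exp(G) = p$.

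The only nontrivial ingredient is the prime-index characterisation of maximal subgroups of a nilpotent group; once this is in hand, each direction reduces to a one-line application of Cauchy's theorem, so I do not expect any real obstacle beyond invoking the standard structure theorem. As a sanity check, this result specialises correctly to the preceding corollary on abelian groups (the condition $\exp(G)=p$ plus abelian gives exactly $G \cong C_p^n$).
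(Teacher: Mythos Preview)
Your argument is correct. The computation $m(G)=p_{\min}$ via the prime-index property of maximal subgroups in nilpotent groups, together with Cauchy's theorem, handles both directions cleanly.

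The paper itself states this corollary without proof. The placement---immediately after the analogous abelian corollary and parallel to the cited result of Lazorec (Theorem~2.5 in the introduction) classifying nilpotent $\mathscr{B}_\psi$-groups---suggests the intended route is: if $G$ is a nilpotent $meo$-group then by Lemma~\ref{meo-group} it is a $\mathscr{B}_\psi$-group, so by Lazorec's theorem $G\cong C_{p^2}$ or $\exp(G)=p$; the case $C_{p^2}$ is then excluded since $meo(C_{p^2})=p^2>p=m(C_{p^2})$. The converse direction is immediate either way. Your direct approach is more self-contained, since it avoids invoking the $\mathscr{B}_\psi$ classification and works purely from the definition of $meo$-group; the paper's implicit route has the advantage of reusing machinery already on the table. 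Both are short.

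One small omission: you tacitly assume $G$ is nontrivial (so that $m(G)$ is defined and $|G|$ has a prime divisor). The paper's own Lemma~\ref{meo-group} carries the same ``non-trivial'' hypothesis, so this is consistent with the ambient conventions, but it would do no harm to say so explicitly.
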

\begin{remark}
	We know that group $\operatorname{Alt}(n)$ has a subgroup $M \cong \operatorname{Alt}(n-1)$, where $n\geq 3$.
	On the other hand,
	$\psi(\operatorname{Alt}(13))=46287964867 \nless 43589145600=|\operatorname{Alt}(14)|$,
	$\psi(\operatorname{Alt}(14))=835826439631
	\nless 653837184000=|\operatorname{Alt}(15)|$. Therefore $\operatorname{Alt}(14)$ is not a $\mathscr{B}_{\psi}$-group.
\end{remark}


\end{document}